\newtheorem{theorem}{Theorem}
\newtheorem{corollary}{Corollary}
\newtheorem{definition}{Definition}
\newtheorem{example}{Example}
\newtheorem{remark}{Remark}
\newenvironment{proof}[1][Proof]{\noindent\textbf{#1.} }{\ \rule{0.5em}{0.5em}}
\begin{document}

\title{{\Large Copula representations and order statistics for conditionally
independent random variables}}
\author{Ismihan Bairamov \\
Department of Mathematics, Izmir University of Economics, Izmir Turkey}
\date{ \ \ \ \ \ }
\maketitle

\begin{abstract}
The copula representations for conditionally independent random variables
and the distribution properties of order statistics of these random
variables are studied.

\textbf{Key words: }Copula, Conditional independence, order statistics.
\end{abstract}

\section{Introduction}

Dawid (1979) noted that many of the important concepts of statistical theory
can be regarded as expressions of conditional independence, thus the
conditional independence offers a new language for the expression of
statistical concepts and a framework for their study. Dawid (1979)
considered \ the random \ variables $X\,,Y$ and $Z.$ If the random variables 
$X$ and $Y$ are independent in their joint distribution given $Z=z,$ for any
value of $z,$ then $X$ and $Y$ are called conditionally independent given $Z$
and denoted by ($X\perp Y\mid Z).$ A general calculus of conditional
independence is developed in \ Dawid (1980), where the general concept of
conditional independence for a statistical operation is introduced. Dawid
(1980) showed how the conditional independence for statistical operations
encompasses the basic properties such as, sufficiency, $(X\perp \Theta \mid
T),$ in which $X$ is a random variable with distributions governed by a
parameter $\Theta ,$ and $T$ is a function of $X,$ or$.$pointwise
independence, adequacy etc. \ Pearl et al. (1989) considered conditional
independence as a statement and addressed the problem of representing the
sum total of independence statements that logically follow from a given set
of such statements. \ Shaked and Spizzichino (1998) considered $n$
nonnegative random variables $T_{i},i=1,2,...,n$ which are interpreted as
lifetimes of $n$ units and assuming that $T_{1},T_{2},...,T_{n}$ are
conditionally independent given \ some random variable $\Theta ,$ determined
the conditions under which these conditionally independent random variables
are positive dependent. In the Bayesian setting it is of interest to know
which kind of dependence arises when $\Theta $ is unknown. Prakasa Rao
(2006) studied the properties of conditionally independent random variables
and proved the conditional versions of generalized Borel-Cantelli lemma,
generalized Kolmogorov's inequality, H\`{a}jek-R\'{e}nyi inequality.
Prakasa-Rao (2006) presented also the conditional versions of classical
strong law of large numbers and central limit theorem.

In this paper a different approach to conditionally independent random
variables is considered, the necessary and sufficient conditions for
conditional independence in terms of the partial derivatives of distribution
functions and copulas are given. Also, the distributional properties of
order statistics of conditionally independent random variables are studied.
The paper is organized as follows: in section 2 we present a definition of
conditionally independent random variables $X_{1},X_{2},....,X_{n}$ given $Z$
and derive a sufficient and necessary conditions for conditional
independence in terms of copulas. These conditions allow to construct
conditionally independent random variables with given bivariate
distributions of $(X_{i},Z)$ and marginal distributions of $X_{i}$'s. In
Section 3 we study the distributions of order statistics from conditionally
independent random variables. It is shown that these distributions can be
expressed in terms of partial derivatives of copulas of $X_{i}$ and $Z$. The
permanent expressions for distributions of order statistics are also
presented.

\section{Conditionally independent random variables}

Let $(X_{1},X_{2},...,X_{n},Z)$ be $n+1$ variate random vector with joint
distribution function (cdf) $%
H(x_{1},x_{2},...,x_{n},z)=C(F_{X_{1}}(x_{1}),F_{X_{2}}(x_{2}),...,F_{X_{n}}(x_{n}),F_{Z}(z)), 
$ where $F_{X_{i}}(x_{i})=P\{X_{i}\leq x_{i}\},$ $i=1,2,...,n$, $%
F_{Z}(z)=P\{Z\leq z\}$ and $C$ is a connecting copula.

\begin{definition}
If 
\begin{eqnarray}
P\{X_{1} &\leq &x_{1},X_{2}\leq x_{2},...,X_{n}\leq x_{n}\mid Z=z\}= 
\nonumber \\
P\{X_{1} &\leq &x_{1}\mid Z=z\}P\{X_{2}\leq x_{2}\mid Z=z\}\cdots
P\{X_{n}\leq x_{n}\mid Z=z\}  \label{1}
\end{eqnarray}%
for all $(x_{1},x_{2},...,x_{n},z)\in 
\mathbb{R}
^{n+1},$ then the random variables $X_{1},X_{2},...,X_{n}$ are said to be
conditional independent given $Z.$
\end{definition}

It is clear that if the random variables $X_{1},X_{2},...,X_{n}$ are
conditionally independent given $Z$ \ then the conditional random variables $%
X_{i,z}\equiv (X_{i}\mid Z=z),i=1,2,...,n$ are independent for all $z\in 
\mathbb{R}
$ and $P\{X_{1}\in B_{1},...,X_{n}\in B_{n}\mid \digamma _{Z}\}=P\{X_{1}\in
B_{1}\mid \digamma _{Z}\}\cdots P\{X_{n}\in B_{n}\mid \digamma _{Z}\}$ a.s.
for any Borel sets $B_{1},B_{2},...,B_{n}$, where $\digamma _{Z}$ is a $%
\sigma -$algebra generated by $Z.$

\begin{example}
\label{Example 0}Bairamov and Arnold (2008) discussed the residual
lifelengths of the remaining components in an $n-k+1-out-of-n$ system with
lifetimes of the components $X_{1},X_{2},\ldots ,X_{n}$, \ respectively. Let 
$X_{i}$' s are independent and identically distributed (iid) random
variables with common absolutely continuous distribution $F.$ If we are
given $X_{k:n}=x$, then the conditional distribution of the subsequent order
statistics $X_{k+1:n},\ldots ,X_{n:k}$ is the same as the distribution of
order statistics of a sample of size $n-k$ from the distribution $F$
truncated below at $x$. If we denote by $Y_{i}^{(k)},i=1,2,\ldots ,n-k$ the
randomly ordered values of $X_{k+1:n},\ldots ,X_{n:n}$, then given $X_{k:n}=x
$, these $Y_{i}^{(k)}$'s will be iid with common survival function $\bar{F}%
(x+y)/\bar{F}(x)$.
\end{example}

\bigskip\ \ 

\subsection{\protect\Large Assumptions and notations }

\textit{Throughout this paper we assume that }$X_{1},X_{2},...,X_{n},Z$%
\textit{\ are absolutely continuous random variables with corresponding
probability density functions (pdf) }$f_{X_{i}}(x),i=1,2,...,n$\textit{\ and 
}$f_{Z}(z),$\textit{\ respectively. Denote by }$%
F_{X_{1},X_{2},...,X_{n}}(x_{1},$\textit{\ }$x_{2},...,x_{n})$\textit{\ and }%
$C_{X_{1},X_{2},...,X_{n}}(u_{1},...,u_{n})$\textit{\ the joint distribution
function and the copula of }$(X_{1},X_{2},...,X_{n}),$\textit{\ respectively
and denote the pdf by }$f_{X_{1},X_{2},...,X_{n}}(x_{1},x_{2},...,x_{n}).$%
\textit{\ We use the following notations for the bivariate marginal
distributions and copulas of random variables }$X_{i}$\textit{\ and }$Z:$%
\textit{\ }$\ $\textit{\ }%
\[
F_{X_{i},Z}(x_{i},z)\equiv P\{X_{i}\leq x,Z\leq z\},i=1,2,...,n, 
\]%
\textit{and }%
\[
F_{X_{i},Z}(x_{i},z)\equiv C_{X_{i},Z}(F_{X_{i}}(x_{i}),F_{Z}(z)), 
\]%
\textit{where }$C_{X_{i},Z}(u,w),$\textit{\ }$(u,w)\in \lbrack 0,1]^{2},$%
\textit{\ is the copula of random variables }$(X_{i},Z).$\textit{\ We use
also the following notations for partial derivatives:}%
\[
\dot{H}(x_{1},x_{2},...,x_{n},z)=\frac{\partial H(x_{1},x_{2},...,x_{n},z)}{%
\partial z},\text{ }\dot{F}_{X_{i},Z}(x_{i},z)=\frac{\partial
F_{X_{i},Z}(x_{i},z)}{\partial z}, 
\]%
\[
\dot{C}(u_{1},u_{2},...,u_{n},w)\equiv \frac{\partial
C(u_{1},u_{2},...,u_{n},w)}{\partial w},\text{ \ }\dot{C}_{X_{i},Z}(u,w)%
\equiv \frac{\partial }{\partial w}C_{X_{i},Z}(u,w). 
\]

The following theorem gives a necessary and sufficient condition for
conditional independence of random variables $X_{1},X_{2},...,X_{n}$ given $%
Z.$

\subsection{Necessary and sufficient conditions for conditional independence}

\begin{theorem}
\label{Theorem 1}If $f_{Z}(z)>0,$ then the random variables $%
X_{1},X_{2},...,X_{n}$ are conditionally independent given $Z=z$ if and only
if 
\begin{equation}
\dot{H}(x_{1},x_{2},...,x_{n},z)=\left( \frac{1}{f_{Z}(z)}\right)
^{n-1}\dprod\limits_{i=1}^{n}\dot{F}_{X_{i},Z}(x_{i},z).  \label{2}
\end{equation}
\end{theorem}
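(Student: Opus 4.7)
The plan is to observe that for absolutely continuous random variables, both sides of the conditional independence identity (\ref{1}) admit clean representations as partial derivatives divided by $f_Z(z)$, so the result reduces to rearranging a single equation.

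First I would express the joint conditional CDF as a ratio of a partial derivative and a density. For absolutely continuous $(X_1,\ldots,X_n,Z)$ and any fixed $(x_1,\ldots,x_n)$, standard disintegration gives
\[
P\{X_1\leq x_1,\ldots,X_n\leq x_n\mid Z=z\}=\frac{1}{f_Z(z)}\int_{-\infty}^{x_1}\!\!\!\cdots\!\!\int_{-\infty}^{x_n} f_{X_1,\ldots,X_n,Z}(t_1,\ldots,t_n,z)\,dt_1\cdots dt_n,
\]
and the right-hand integral equals exactly $\dot{H}(x_1,\ldots,x_n,z)$ by differentiating the joint CDF $H$ in $z$ and invoking Fubini. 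This uses $f_Z(z)>0$, which is the hypothesis of the theorem.

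Next I would apply the same identity to each pair $(X_i,Z)$ to get
\[
P\{X_i\leq x_i\mid Z=z\}=\frac{\dot{F}_{X_i,Z}(x_i,z)}{f_Z(z)},\qquad i=1,2,\ldots,n.
\]
Substituting both expressions into the definition (\ref{1}) of conditional independence yields
\[
\frac{\dot{H}(x_1,\ldots,x_n,z)}{f_Z(z)}=\prod_{i=1}^{n}\frac{\dot{F}_{X_i,Z}(x_i,z)}{f_Z(z)},
\]
which, after clearing one copy of $f_Z(z)$ from the left and $n$ copies from the right, is precisely (\ref{2}). Since every step is reversible under the assumption $f_Z(z)>0$, both directions of the equivalence are obtained simultaneously.

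The only point requiring some care, and hence the main technical nuisance rather than a genuine obstacle, is justifying the interchange of differentiation in $z$ with the $n$-fold Lebesgue integration that identifies $\dot{H}$ with the integrated density; this is standard under absolute continuity and $f_Z(z)>0$, and can be dispatched by dominated convergence. Once that identification is in hand, the theorem is essentially a one-line algebraic rearrangement of the definition.
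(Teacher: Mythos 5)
Your proposal is correct and follows essentially the same route as the paper: both arguments reduce to the identification $P\{X_1\leq x_1,\ldots,X_n\leq x_n\mid Z=z\}=\dot{H}(x_1,\ldots,x_n,z)/f_Z(z)$ and $P\{X_i\leq x_i\mid Z=z\}=\dot{F}_{X_i,Z}(x_i,z)/f_Z(z)$, after which (\ref{1}) and (\ref{2}) differ only by clearing powers of $f_Z(z)>0$. The paper obtains this identification as the limit of conditioning on $\{z\leq Z<z+\Delta z\}$ whereas you obtain it by disintegrating the joint density, but these are the same one-line observation; your version actually records more of the justification than the paper does.
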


\begin{proof}
It is clear that (\ref{1}) and (\ref{2}) both are equivalent to 
\begin{eqnarray*}
\lim\limits_{\Delta z\rightarrow 0}P\{X_{1} &\leq &x_{1},X_{2}\leq
x_{2},...,X_{n}\leq x_{n}\mid z\leq Z<z+\Delta z\} \\
&=&\lim\limits_{\Delta z\rightarrow 0}P\{X_{1}\leq x_{1}\mid z\leq
Z<z+\Delta z\}\cdots P\{X_{n}\leq x_{n}\mid z\leq Z<z+\Delta z\}.
\end{eqnarray*}%
\newline
\end{proof}

\begin{corollary}
\label{Corollary1}The random variables $X_{1},X_{2},...,X_{n}$ are
conditionally independent given $Z$ if and only if 
\begin{equation}
\dot{C}(u_{1},u_{2},...,u_{n},w)=\dprod\limits_{i=1}^{n}\dot{C}%
_{X_{i},Z}(u_{i},w)\text{ for all }0\leq u_{1},u_{2},...,u_{n},w\leq 1.
\label{3}
\end{equation}
\end{corollary}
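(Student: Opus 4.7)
The plan is to derive the corollary directly from Theorem~\ref{Theorem 1} by translating the identity~(\ref{2}) into one involving copulas via the chain rule. First I would differentiate the Sklar-type representation
\[
H(x_{1},\ldots ,x_{n},z)=C\bigl(F_{X_{1}}(x_{1}),\ldots ,F_{X_{n}}(x_{n}),F_{Z}(z)\bigr)
\]
with respect to $z$, obtaining
\[
\dot{H}(x_{1},\ldots ,x_{n},z)=\dot{C}\bigl(F_{X_{1}}(x_{1}),\ldots ,F_{X_{n}}(x_{n}),F_{Z}(z)\bigr)\,f_{Z}(z),
\]
and analogously derive $\dot{F}_{X_{i},Z}(x_{i},z)=\dot{C}_{X_{i},Z}(F_{X_{i}}(x_{i}),F_{Z}(z))\,f_{Z}(z)$ from $F_{X_{i},Z}(x_{i},z)=C_{X_{i},Z}(F_{X_{i}}(x_{i}),F_{Z}(z))$. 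The assumption $f_{Z}(z)>0$ makes these computations unambiguous.

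Next I would substitute these two expressions into~(\ref{2}). On the right-hand side the $n$ factors each contribute one copy of $f_{Z}(z)$, yielding a total factor $f_{Z}(z)^{n}$, which combined with $(1/f_{Z}(z))^{n-1}$ leaves exactly one $f_{Z}(z)$. After cancelling this against the $f_{Z}(z)$ on the left, the identity~(\ref{2}) reduces to
\[
\dot{C}\bigl(F_{X_{1}}(x_{1}),\ldots ,F_{X_{n}}(x_{n}),F_{Z}(z)\bigr)=\prod_{i=1}^{n}\dot{C}_{X_{i},Z}\bigl(F_{X_{i}}(x_{i}),F_{Z}(z)\bigr)
\]
for every $(x_{1},\ldots ,x_{n},z)\in\mathbb{R}^{n+1}$.

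Finally, since each $X_{i}$ and $Z$ is absolutely continuous, the marginal distribution functions $F_{X_{i}}$ and $F_{Z}$ are continuous and their ranges are dense in $[0,1]$. Substituting $u_{i}=F_{X_{i}}(x_{i})$ and $w=F_{Z}(z)$ and invoking continuity of the copula partial derivatives to extend to the full unit cube delivers~(\ref{3}). Conversely, starting from~(\ref{3}) and reversing the chain-rule step recovers~(\ref{2}), and Theorem~\ref{Theorem 1} then yields conditional independence.

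The argument is essentially bookkeeping; the only mildly delicate point is the final extension from the range of the marginals to the entire square $[0,1]^{n+1}$, but this is routine given absolute continuity and the continuity of $\dot{C}$ and $\dot{C}_{X_{i},Z}$ wherever the copulas themselves are differentiable in their last argument.
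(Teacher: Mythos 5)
Your proposal is correct and follows essentially the same route as the paper's own proof: apply the chain rule to the Sklar representations of $H$ and $F_{X_{i},Z}$, cancel the powers of $f_{Z}(z)$ appearing in (\ref{2}), and then substitute $u_{i}=F_{X_{i}}(x_{i})$, $w=F_{Z}(z)$ to obtain (\ref{3}). You are in fact slightly more careful than the paper, which passes over the point that this substitution only reaches the ranges of the marginals and tacitly relies on the continuity/density argument you make explicit.
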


\begin{proof}
From (\ref{2}) one can write%
\begin{eqnarray}
&&\frac{1}{f_{Z}(z)}\frac{\partial }{\partial z}\left[
C(F_{X_{1}}(x_{1}),F_{X_{2}}(x_{2}),...,F_{X_{n}}(x_{n}),F_{Z}(z))\right] 
\nonumber \\
&=&\left( \frac{1}{f_{Z}(z)}\right) ^{n}\dprod\limits_{i=1}^{n}\frac{%
\partial }{\partial z}\left[ C_{X_{i},Z}(F_{X_{i}}(x_{i}),F_{Z}(z))\right] .
\label{4}
\end{eqnarray}%
It follows from (\ref{4}) that 
\begin{eqnarray*}
&&\dot{C}(F_{X_{1}}(x_{1}),F_{X_{2}}(x_{2}),...,F_{X_{n}}(x_{n}),F_{Z}(z)) \\
&=&\dprod\limits_{i=1}^{n}\dot{C}_{X_{i},Z}(F_{X_{i}}(x_{i}),F_{Z}(z))
\end{eqnarray*}%
and the transformation $F_{X_{i}}(x_{i})=u_{i},(i=1,2,...,n),$ $F_{Z}(z)=w$
leads to (\ref{3}).
\end{proof}

\begin{corollary}
\label{Corollary2} $X_{1},X_{2},...,X_{n}$ are conditionally independent
given $Z,$ \ if and only if the following integral representations for joint
distribution function and copula hold true: 
\begin{eqnarray}
F_{X_{1},X_{2},...,X_{n}}(x_{1},x_{2},...,x_{n}) &=&\dint\limits_{-\infty
}^{\infty }\left( \frac{1}{f_{Z}(z)}\right) ^{n-1}\dprod\limits_{i=1}^{n}%
\frac{\partial F_{X_{i},Z}(x_{i},z)}{\partial z}dz  \nonumber \\
-\infty &<&x_{1}<\cdots <x_{n}<\infty  \label{4.1}
\end{eqnarray}%
and 
\begin{eqnarray}
C_{X_{1},X_{2},...,X_{n}}(u_{1},u_{2},...,u_{n})
&=&\dint\limits_{0}^{1}\dprod\limits_{i=1}^{n}\dot{C}_{X_{i},Z}(u_{i},w)dw, 
\nonumber \\
0 &\leq &u_{1},...,u_{n}\leq 1.  \label{4.2}
\end{eqnarray}
\end{corollary}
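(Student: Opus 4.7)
My approach is to obtain both identities as immediate integral consequences of the two preceding results. For (\ref{4.1}) I would start from the pointwise factorization (\ref{2}) of Theorem \ref{Theorem 1} and integrate both sides with respect to $z$ over $\mathbb{R}$; on the left the fundamental theorem of calculus telescopes to $H(x_1,\ldots,x_n,\infty)-H(x_1,\ldots,x_n,-\infty)$, which equals $F_{X_1,X_2,\ldots,X_n}(x_1,x_2,\ldots,x_n)$ since $H$ vanishes at $z=-\infty$ and reduces to the joint marginal at $z=+\infty$. For (\ref{4.2}) I would apply the same device to (\ref{3}) of Corollary \ref{Corollary1}, integrating over $w\in[0,1]$; this time the telescoping uses the copula boundary identities $C(u_1,\ldots,u_n,1)=C_{X_1,X_2,\ldots,X_n}(u_1,\ldots,u_n)$ and $C(u_1,\ldots,u_n,0)=0$, both standard for a copula in its last argument.

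For the converse direction I plan to reverse the procedure by differentiation. The key observation is that the same integral identity really holds with an arbitrary upper limit $z_0$ in place of $+\infty$, since the mixture argument also computes the full joint $F_{X_1,\ldots,X_n,Z}(x_1,\ldots,x_n,z_0)$. Differentiating in $z_0$ then recovers the pointwise identity (\ref{2}), whence Theorem \ref{Theorem 1} delivers conditional independence; the copula statement is handled identically using Corollary \ref{Corollary1} after differentiating the analogue of (\ref{4.2}) in its upper limit $w_0$.

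The step I expect to require the most care is the converse: as literally stated, (\ref{4.1}) is only a single scalar equation in $(x_1,\ldots,x_n)$ for the two candidate integrands, and a bare integral identity does not in general pin down the integrand pointwise. The fix is to observe that the same factorization extends to partial integrals up to $z_0$, giving an equality of antiderivatives from which differentiation produces (\ref{2}) almost everywhere with respect to $f_Z$. Once this is acknowledged, the remaining content of the corollary is essentially the fundamental theorem of calculus together with the marginalization property of copulas, so no further substantive calculation is needed.
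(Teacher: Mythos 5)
Your first half coincides with the paper's own proof, which disposes of the corollary in one line by integrating (\ref{2}) over $z\in\mathbb{R}$ and (\ref{3}) over $w\in[0,1]$, with exactly the boundary evaluations you describe: $H(x_1,\ldots,x_n,+\infty)=F_{X_1,\ldots,X_n}(x_1,\ldots,x_n)$ and $C(u_1,\ldots,u_n,1)=C_{X_1,\ldots,X_n}(u_1,\ldots,u_n)$, with both vanishing at the lower endpoint. So the direction ``conditional independence implies the integral representations'' is correct and is the same route as the paper.

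The converse direction (that (\ref{4.1})--(\ref{4.2}) imply conditional independence) is where the genuine gap lies, and you have correctly located it but not repaired it. Your repair --- that ``the same integral identity really holds with an arbitrary upper limit $z_0$, since the mixture argument also computes $F_{X_1,\ldots,X_n,Z}(x_1,\ldots,x_n,z_0)$'' --- is circular: the assertion that the partial integral up to $z_0$ equals $H(x_1,\ldots,x_n,z_0)$ is precisely the antiderivative form of (\ref{2}), i.e.\ it is equivalent to the conditional independence you are trying to deduce, and the mixture argument you invoke to justify it presupposes that conditional independence. Nothing in the stated hypotheses gives you the identity at intermediate upper limits. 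In fact no argument can close this gap from the literal hypotheses: (\ref{4.1}) and (\ref{4.2}) involve only the $n$-dimensional margin $F_{X_1,\ldots,X_n}$ (respectively $C_{X_1,\ldots,X_n}$) and the bivariate margins $F_{X_i,Z}$ (respectively $C_{X_i,Z}$), and never the full $(n+1)$-variate law $H$ or $C$; since conditional independence is a property of $H$ not determined by these margins, one can take two distinct $(n+1)$-variate distributions sharing all of these margins, of which only one is conditionally independent, and they satisfy or violate (\ref{4.1})--(\ref{4.2}) together. The ``if'' half therefore requires the strengthened hypothesis you implicitly use (the identity for every upper limit $z_0$, which is just (\ref{2}) itself), and the paper's one-line proof silently establishes only the ``only if'' half; your proposal goes further than the paper but does not validly reach the converse.
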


\begin{proof}
The proof can be made easily by integrating (\ref{2}) and (\ref{3}).
\end{proof}

\subsection{Construction of conditionally independent random variables}

Using Theorem 1 one can construct the conditionally independent random
variables $X_{1},X_{2},...,X_{n}$ given $Z,$ with given joint copulas $%
C_{X_{i},Z}(u_{i},w),$ $(u,w)\in \lbrack 0,1]^{2},$ of $(X_{i},Z),$ $%
i=1,2,...,n.$ The constructed joint distributions of conditionally
independent random variables can be used, for example, in reliability
analysis, for modeling lifetimes of the system having $n+1$ components, such
that knowing the exact lifetime of one of the components allows the
independence assumption for other components. Consider for example a system
of three dependent components with lifetimes $X_{1},X_{2},Z$ having joint
cdf $F_{X_{1},X_{2},Z}(x_{1},x_{2},z).$ Assume that the lifetime of the
system is $T=\max \{\min (Z,X_{1}),\min (Z,X_{2})\}$ and if $Z=z$ is given,
then the interaction between two other components becomes weaker, hence $%
X_{1}$ and $X_{2}$ can be assumed to be independent. In this and many
similar applications, where the conditional independence is a subject, the
constructed joint distributions of conditionally independent random
variables can be used for modeling of lifetimes or other random variables of
interest. In Shaked and Spizzichino \ (1998) some interesting applications
of conditionally independent random variables, such as an imperfect repair
with random effectiveness and lifetimes in random environments are discussed.

The following examples demonstrate the construction procedure of
conditionally independent random variables by using Theorem 1.

\begin{example}
\label{example1}Let $n=2.$ Denote by $C(u,v,w)$ the copula of random
variables \ $(X_{1},X_{2},Z)$, $\ $by $C_{X_{1},Z}(u,w)$ the copula of ($%
X_{1},Z)$ and by $C_{X_{2},Z}(v,w)$ the copula of $(X_{2},Z),$ respectively.
Assume that the joint distributions of ($X_{1},Z)$ and $(X_{2},Z)$ are
classic Farlie-Gumbel-Morgenstern (FGM) distribution, i.e. the copulas are 
\begin{eqnarray*}
C_{X_{1},Z}(u,w) &=&uw\{1+\alpha (1-u)(1-w)\},(u,w)\in \lbrack
0,1]^{2},-1\leq \alpha \leq 1 \\
C_{X_{2},Z}(v,w) &=&vw\{1+\alpha (1-v)(1-w)\},(v,w)\in \lbrack
0,1]^{2},-1\leq \alpha \leq 1,
\end{eqnarray*}%
with 
\begin{eqnarray*}
\dot{C}_{X_{1},Z}(u,w) &=&\frac{\partial }{\partial w}\left\{ uw\{1+\alpha
(1-u)(1-v)\}\right\} \\
&=&u+\alpha (1-u)(1-2w),(u,w)\in \lbrack 0,1]^{2},-1\leq \alpha \leq 1 \\
\dot{C}_{X_{2},Z}(v,w) &=&v+\alpha (1-v)(1-2w),(v,w)\in \lbrack
0,1]^{2},-1\leq \alpha \leq 1.
\end{eqnarray*}%
Then from equation (\ref{3}) we have 
\begin{eqnarray}
\dot{C}(u,v,w) &=&\dot{C}_{X_{1},Z}(u,w)\dot{C}_{X_{2},Z}(v,w)  \nonumber \\
&=&\left[ u+\alpha (1-u)(1-2w)\right] \left[ v+\alpha (1-v)(1-2w)\right] 
\nonumber \\
&=&uv+\alpha uv(2-u-v)(1-2w)+\alpha ^{2}uv(1-u)  \label{5} \\
&&\times (1-v)(1-2w)^{2}.  \nonumber
\end{eqnarray}%
Integrating (\ref{5}) with respect to $w,$ one obtains 
\begin{equation}
C(u,v,w)=uvw+\alpha uvw(2-u-v)(1-w)-\frac{\alpha ^{2}}{6}%
uv(1-u)(1-v)(1-2w)^{3}.  \label{6}
\end{equation}%
Therefore, the random variables $X_{1},X_{2}$ with the copula 
\[
C_{X_{1},X_{2}}(u,v)=uv+\frac{\alpha ^{2}}{6}uv(1-u)(1-v)\text{ } 
\]%
are conditionally independent if the copula of \ $(X_{1},X_{2},Z)$ is $%
C(u,v,w)$ given in (\ref{6}).
\end{example}

\begin{example}
\label{example2}Using calculations in Example 1 it is not difficult to see
that if the random variables $X_{1},X_{2},...,X_{n}$ and $Z$ are defined
with their copulas as%
\begin{eqnarray*}
C_{X_{1},Z}(u_{1},w) &=&u_{1}w\{1+\alpha (1-u_{1})(1-w)\}, \\
C_{X_{2},Z}(u_{2},w) &=&u_{2}w\{1+\alpha (1-u_{2})(1-w)\},-1\leq \alpha \leq
1
\end{eqnarray*}%
and 
\[
C_{X_{i},Z}(u_{i},w)=u_{i}w,\text{ }i=3,4,...,n,\text{ }0\leq
u_{1},u_{2},...,u_{n},w\leq 1, 
\]%
then the solution of the equation 
\[
\dot{C}(u_{1},u_{2},\cdots u_{n},w)=\dprod\limits_{i=1}^{n}\dot{C}%
_{X_{i},Z}(u_{i},w) 
\]%
is 
\begin{eqnarray}
C(u_{1},u_{2},...,u_{n},w) &=&u_{1}u_{2}\cdots u_{n}w+\alpha
u_{1}u_{2}\cdots u_{n}w(2-u_{1}-u_{2})(1-w)  \nonumber \\
&&-\frac{\alpha ^{2}}{6}u_{1}u_{2}\cdots u_{n}(1-u_{1})(1-u_{2})(1-2w)^{3}.
\label{5.1}
\end{eqnarray}%
The copula of $X_{1},X_{2},...,X_{n}$ is 
\begin{equation}
C_{X_{1},X_{2},...,X_{n}}(u_{1},u_{2},...,u_{n})=u_{1}u_{2}\cdots u_{n}+%
\frac{\alpha ^{2}}{6}u_{1}u_{2}\cdots u_{n}(1-u_{1})(1-u_{2})  \label{6.1}
\end{equation}%
and $X_{1},X_{2},...,X_{n}$ are conditionally independent given $Z.$
\end{example}

\begin{remark}
\label{remark 1}The conditional independence of random variables $\
X_{1},X_{2},...,X_{n}$ \ makes it possible to evaluate many important
probabilities concerning dependent random variables by replacing them with
the independent pairs of random variables. Assume that we need to calculate
the probability of some event connected with the dependent random variables $%
X_{1},X_{2},...,X_{n},$ for example consider 
\begin{eqnarray*}
P\{(X_{1},X_{2},...,X_{n}) &\in &\mathbf{B}\} \\
&=&\dint\limits_{\mathbf{B}%
}f_{X_{1},X_{2},...,X_{n}}(x_{1},x_{2},...,x_{n})dx_{1}dx_{2}\cdots dx_{n},
\end{eqnarray*}%
where $\mathbf{B}\in \Re ^{n},$ $\Re ^{n}$ is the Borel $\sigma -$algebra of
subsets of $%
\mathbb{R}
^{n},$ $\mathbf{B}=B_{1}\times B_{2}\times \cdots \times B_{n}$ and $B_{i}$
are Borel sets on $%
\mathbb{R}
$ and $f_{X_{1},X_{2},...,X_{n}}(x_{1},x_{2},...,x_{n})$ is the joint pdf of 
$X_{1},X_{2},...,X_{n}.$ If the random variables $X_{1},X_{2},...,X_{n}$ are
conditionally independent \ given $Z,$ \ then conditioning on $Z$ we have 
\begin{eqnarray}
P\{(X_{1},X_{2},...,X_{n}) &\in &B\}=\dint\limits_{-\infty }^{\infty
}P\{(X_{1},X_{2},...,X_{n})\in B\mid Z=z\}dF_{Z}(z)  \nonumber \\
&=&\dint\limits_{-\infty }^{\infty }\dprod\limits_{i=1}^{n}P\{X_{i}\in
B_{i}\mid Z=z\}dF_{Z}(z)=  \nonumber \\
&&\dint\limits_{-\infty }^{\infty }\left[ \frac{1}{f_{Z}(z)}\right]
^{n-1}\left\{ \dprod\limits_{i=1}^{n}\dint\limits_{B_{i}}\frac{\partial \dot{%
F}_{X_{i},Z}(x_{i},z)}{\partial x_{i}}dx_{i}\right\} dz.  \label{aa2}
\end{eqnarray}%
Furthermore, if $\mathbf{G}$ is some region on $%
\mathbb{R}
^{n},$ then%
\begin{eqnarray*}
P\{(X_{1},X_{2},...,X_{n}) &\in &\mathbf{G}\}=\dint\limits_{-\infty
}^{\infty }P\{(X_{1},X_{2},...,X_{n})\in \mathbf{G}\mid Z=z\}dF_{Z}(z) \\
&=&\dint\limits_{-\infty }^{\infty }P\{(X_{1,z},X_{2,z},...,X_{n,z})\in 
\mathbf{G}\}dF_{Z}(z)
\end{eqnarray*}%
\begin{equation}
=\dint\limits_{-\infty }^{\infty }\left( \dint\limits_{\mathbf{G}%
}f_{X_{1,z}}(x_{1})f_{X_{2},z}(x_{2})\cdots
f_{X_{n},z}(x_{n})dx_{1}dx_{2}\cdots dx_{n}\right) dF_{Z}(z),  \label{aa4}
\end{equation}%
where $X_{i,z}=(X_{i}\mid Z=z),i=1,2,...,n$ are independent random variables
with pdf's 
\[
f_{X_{i,z}}(x_{i})=\frac{1}{f_{Z}(z)}\frac{\partial \dot{F}%
_{X_{i},Z}(x_{i},z)}{\partial x_{i}},i=1,2,...,n, 
\]%
respectively. \ In the following example we illustrate how to use (\ref{aa4}%
) in calculating of stress-strength probability $P\{X_{1}<X_{2}\}$ for
dependent, but conditionally independent random variables.
\end{remark}

\begin{example}
\label{example3}First we construct conditionally independent random
variables $X_{1},X_{2}$ given $Z=z.$ Let 
\begin{eqnarray*}
F_{X_{1},Z}(u,z) &=&u^{2}z\{1+(1-u^{2})(1-z)\},\text{ }%
F_{X_{1}}(u)=u^{2},F_{Z}(z)=z,\text{ }0\leq u,z\leq 1 \\
F_{X_{2},Z}(v,z) &=&vz\{1+(1-v)(1-z)\},\text{ }F_{X_{2}}(v)=v,F_{Z}(z)=z,%
\text{ }0\leq v,z\leq 1,
\end{eqnarray*}%
It is easy to calculate 
\begin{eqnarray}
\dot{F}_{X_{1},Z}(u,z) &=&u^{2}[1-(1-u^{2})(1-z)]+u^{2}z(1-u^{2})
\label{9.0} \\
\dot{F}_{X_{2},Z}(v,z) &=&v[1+(1-v)(1-z)]+vz(v-1).  \label{9}
\end{eqnarray}%
Then using Theorem 1 we have 
\begin{equation}
\dot{F}_{X_{1},X_{2},Z}(u,v,z)=\dot{F}_{X_{1},Z}(u,z)\dot{F}%
_{X_{2},Z}(v,z)dz.  \label{9.00}
\end{equation}%
Using (\ref{9.0}), (\ref{9} in \ref{9.00} ) and then integrating we have 
\begin{eqnarray}
&&F_{X_{1},X_{2},Z}(u,v,z)  \nonumber \\
&=&\frac{4}{3}u^{2}v^{2}z^{3}-\frac{4}{3}u^{2}vz^{3}+\frac{4}{3}u^{4}vz^{3}-%
\frac{4}{3}u^{4}v^{2}z^{3}-3z^{2}u^{4}v+2z^{2}u^{4}v^{2}+2z^{2}u^{2}v 
\nonumber \\
&&-z^{2}u^{2}v^{2}+2u^{4}vz-u^{4}v^{2}z.  \label{10}
\end{eqnarray}%
The joint cdf of $X_{1}$ and $X_{2}$ is 
\begin{equation}
F_{X_{1},X_{2}}(u,v)=\frac{2}{3}u^{2}v+\frac{1}{3}u^{2}v^{2}+\frac{1}{3}%
u^{4}v-\frac{1}{3}u^{4}v^{2}  \label{10.0}
\end{equation}%
and the pdf is 
\[
f_{X_{1},X_{2}}(u,v)=\frac{4}{3}u+\frac{4}{3}uv+\frac{4}{3}u^{3}-\frac{8}{3}%
u^{3}v. 
\]%
The corresponding copula of $(X_{1},X_{2})$ can be obtained by using
transformation $F_{X_{1}}(u)=u^{2}=t,$ $F_{X_{2}}(v)=v=s$ and it is 
\begin{eqnarray*}
C_{X_{1},X_{2}}(t,s) &=&\frac{2}{3}ts+\frac{1}{3}ts^{2}+\frac{1}{3}t^{2}s-%
\frac{1}{3}t^{2}s^{2}, \\
0 &\leq &t,s\leq 1.
\end{eqnarray*}
\end{example}

It follows from Theorem 1 that $X_{1},X_{2}$ are conditionally independent
given $Z=z.$

Now consider the probability $P\{X_{1}<X_{2}\}.$ By usual way integrating $%
f_{X_{1},X_{2}}(u,v)$ over the set $\{(u,v):u<v\}$ we obtain 
\begin{equation}
P\{X_{1}<X_{2}\}=\dint\limits_{0}^{1}\dint%
\limits_{0}^{v}f_{X_{1},X_{2}}(u,v)dudv=\frac{31}{90}.  \label{10.1}
\end{equation}%
On the other hand using conditional independence of $X_{1},X_{2}$ given $Z,$
we have 
\begin{eqnarray}
P\{X_{1} &<&X_{2}\}=\dint P\{X_{1}<X_{2}\mid Z=z\}dF_{Z}(z)  \nonumber \\
&=&\dint P\{X_{1,z}<X_{2,z}\}dF_{Z}(z)  \nonumber \\
&=&\dint\limits_{0}^{1}\left(
\dint\limits_{0}^{1}F_{1,z}(u)dF_{2,z}(u)\right) dF_{Z}(z),  \label{11}
\end{eqnarray}%
where $X_{1,z}\equiv X_{1}\mid Z=z$ and $X_{2,z}\equiv X_{2}\mid Z=z$ are
independent random variables with cdf's 
\begin{eqnarray*}
F_{X_{1,z}}(x) &=&\frac{1}{f_{Z}(z)}\dot{F}_{X_{1},Z}(x,z) \\
\text{ and }F_{X_{2,z}}(x) &=&\frac{1}{f_{Z}(z)}\dot{F}_{X_{2},Z}(x,z),
\end{eqnarray*}%
respectively. \ Since $f_{Z}(z)=1,$ then from (\ref{11}) taking into account
(\ref{9.0}) and (\ref{9}) we have 
\begin{eqnarray}
P\{X_{1} &<&X_{2}\}=\dint\limits_{0}^{1}\left( \dint\limits_{0}^{1}\left\{
u^{2}[1-(1-u^{2})(1-z)]+u^{2}z(1-u^{2})\right\} dF_{2,z}(u)\right) dz 
\nonumber \\
&=&\dint\limits_{0}^{1}\left( \dint\limits_{0}^{1}\left\{
u^{2}[1-(1-u^{2})(1-z)]+u^{2}z(1-u^{2})\right\} \left\{
1+(1-2u)(1-2z)\right\} du\right) dz  \nonumber \\
&=&\dint\limits_{0}^{1}\left( \frac{1}{15}+\frac{7}{15}z+\frac{2}{15}%
z^{2}\right) dz=\frac{31}{90},  \label{12}
\end{eqnarray}%
which agrees with (\ref{10.1}).

\section{Order statistics}

Denote by $X_{1:n},X_{2:n},...,X_{n:n}$ the order statistics of
conditionally independent random variables $X_{1},X_{2},...,X_{n}$ given $Z.$
We are interested in distribution of order statistics $X_{r:n},$ $1\leq
r\leq n$ and the joint distributions of $X_{r:n}$ and $X_{s:n},$ $1\leq
r<s\leq n.$ The formulas for distributions of order statistics contains
expressions depending on copulas of \ pairs $(X_{i},Z),$ $i=1,2,...,n$ and
permanents.

\subsection{Distribution of a single order statistics}

Conditioning on $Z$ one obtains 
\begin{eqnarray*}
P\{X_{n:n} &\leq &x\}=\dint\limits_{-\infty }^{\infty }P\{X_{n:n}\leq x\mid
Z=z\}dF_{Z}(z) \\
&=&\dint\limits_{-\infty }^{\infty }\dprod\limits_{i=1}^{n}P\{X_{i}\leq
x\mid Z=z\}dF_{Z}(z)=\dint\limits_{-\infty }^{\infty }\left( \frac{1}{%
f_{Z}(z)}\right) ^{n}\dprod\limits_{i=1}^{n}\frac{\partial
F_{X_{i},Z}(x_{i},z)}{\partial z}dF_{Z}(z) \\
&=&\dint\limits_{-\infty }^{\infty }\dprod\limits_{i=1}^{n}\dot{C}%
_{X_{i},Z}(F_{i}(x_{i}),F_{Z}(z))dF_{Z}(z)=\dint\limits_{0}^{1}\dprod%
\limits_{i=1}^{n}\dot{C}_{X_{i},Z}(F_{i}(x_{i}),s)ds
\end{eqnarray*}%
and similarly

\[
P\{X_{1:n}\leq x\}=1-\dint\limits_{0}^{1}\dprod\limits_{i=1}^{n}[1-\dot{C}%
_{X_{i},Z}(F_{i}(x_{i}),s)]ds. 
\]%
The distribution of $X_{r:n}$ ,$1\leq x\leq n$ can be derived as follows:%
\begin{eqnarray}
F_{r:n}(x) &\equiv &P\{X_{r:n}\leq
x\}=\dsum\limits_{i=r}^{n}\dint\limits_{-\infty }^{\infty }P\{\text{exactly }%
i\text{ of }X^{\text{'}}s\text{ are }\leq x\mid Z=z\}dF_{Z}(z)  \nonumber \\
&=&\dsum\limits_{i=r}^{n}\frac{1}{i!(n-i)!}\dsum%
\limits_{(j_{1},j_{2},...,j_{n})\in S_{n}}\dint\limits_{-\infty }^{\infty
}\left\{ \dprod\limits_{l=1}^{i}P\{X_{j_{l}}\leq x\mid Z=z\}\right. 
\nonumber \\
&&\left. \times \dprod\limits_{l=i+1}^{n}\left[ 1-P\{X_{j_{l}}\leq x\mid
Z=z\}\right] \right\} dF_{Z}(z)  \nonumber \\
&=&\dsum\limits_{i=r}^{n}\frac{1}{i!(n-i)!}\dsum%
\limits_{(j_{1},j_{2},...,j_{n})\in S_{n}}\dint\limits_{0}^{1}\left\{ \dot{C}%
_{X_{j_{l}},Z}(F_{X_{j_{l}}}(x),s)\right.  \label{6a} \\
&&\left. \times \dprod\limits_{l=i+1}^{n}\left[ 1-\dot{C}%
_{X_{j_{l}},Z}(F_{X_{j_{l}}}(x),s)\right] \right\} ds,  \nonumber
\end{eqnarray}%
where 
\begin{equation}
S_{n}=\{(j_{1},j_{2},...,j_{n}),1\leq j_{1},j_{2},...,j_{n}\leq n\},
\label{per}
\end{equation}%
is the set of all $n!$ permutations of $(1,2,...,n)$ and $%
\dsum\limits_{(j_{1},j_{2},...,j_{n})\in S_{n}}$ extends over all elements
of $S_{n}.$

\begin{remark}
\label{Remark 1}If $\ X_{1},X_{2},...,X_{n}$ are identically distributed,
i.e. $\ F_{X_{i}}(x)=F_{X}(x),\forall x\in 
\mathbb{R}
,i=1,2,...,n$ and the joint distributions of ($%
X_{1},Z),(X_{2},Z),...,(X_{n},Z)$ are the same, i.e. $%
C_{X_{i},Z}(u,w)=C_{X,Z}(u,w),\forall (u,w)\in \lbrack 0,1]^{2},$ $%
i=1,2,...,n$ \ then 
\begin{equation}
F_{r:n}(x)=\dsum\limits_{i=r}^{n}\binom{n}{i}\dint\limits_{0}^{1}\left[ \dot{%
C}_{X,Z}(F_{X}(x),s)\right] ^{i}\left[ 1-\dot{C}_{X,Z}(F_{X}(x),s)\right]
^{n-i}ds.  \label{7}
\end{equation}
\end{remark}

\begin{remark}
\label{remark 1. 2}It follows from (\ref{7}) that, if $C_{X_{i},Z}(u,w)=uw,$
i.e. $X_{1},X_{2},...,X_{n}$ are independent and identically distributed \
(iid) random variables, then $\dot{C}_{X,Z}(F_{X}(x),s)=F_{X}(x)$ and%
\[
F_{r:n}(x)=\dsum\limits_{i=r}^{n}\binom{n}{i}F_{X}^{i}(x)\left[ 1-F_{X}(x)%
\right] ^{n-i}. 
\]
\end{remark}

\subsection{The joint distributions of two order statistics}

The joint distribution of $X_{r:n},$ and $X_{s:n}$ is%
\[
F_{r,s}(x,y)=P\{X_{r:n}\leq x,X_{s:n}\leq y\} 
\]%
\begin{eqnarray*}
&=&\dsum\limits_{i=r}^{n}\dsum\limits_{j=\max (s-i,0)}^{n-i}\frac{1}{%
i!(j-i)!(n-i-j)!}\dint\limits_{-\infty }^{\infty }\left\{
\dsum\limits_{(l_{1},...,l_{n})\in
S_{n}}\dprod\limits_{k=1}^{i}P\{X_{l_{k}}\leq x\mid Z=z\}\right. \\
&&\times \dprod\limits_{k=i+1}^{i+j}\left[ P\{X_{l_{k}}\leq y\mid
Z=z\}-P\{X_{l_{k}}\leq x\mid Z=z\}\right] \\
&&\left. \times \dprod\limits_{k=i+j+1}^{n}\left[ 1-P\{X_{l_{k}}\leq y\mid
Z=z\}\right] \right\} dF_{Z}(z)
\end{eqnarray*}%
and in terms of copulas 
\[
F_{r,s}(x,y) 
\]%
\begin{eqnarray}
&=&\dsum\limits_{i=r}^{n}\dsum\limits_{j=\max (s-i,0)}^{n-i}\frac{1}{%
i!(j-i)!(n-i-j)!}\dsum\limits_{(l_{1},...,l_{n})\in
S_{n}}\dint\limits_{0}^{1}\left\{ \dprod\limits_{k=1}^{i}\dot{C}%
_{X_{l_{k}},Z}(F_{X_{l_{k}}}(x),s)\right.  \nonumber \\
&&\times \dprod\limits_{k=i+1}^{i+j}\left[ \dot{C}%
_{X_{l_{k}},Z}(F_{X_{l_{k}}}(y),s)-\dot{C}_{X_{l_{k}},Z}(F_{l_{k}}(x),s)%
\right]  \nonumber \\
&&\left. \times \dprod\limits_{k=i+j+1}^{n}\left[ 1-\dot{C}%
_{X_{l_{k}},Z}(F_{X_{l_{k}}}(y),s)\right] \right\} ds  \label{8}
\end{eqnarray}

\begin{remark}
\label{Remark 2}If $\ X_{1},X_{2},...,X_{n}$ are identically distributed
with cdf $F_{X}(x),$ and the joint distributions of ($%
X_{1},Z),(X_{2},Z),...,(X_{n},Z)$ are the same, i.e. $%
C_{X_{i},Z}(u,w)=C_{X,Z}(u,w),\forall (u,w)\in \lbrack 0,1]^{2},$ $%
i=1,2,...,n,$ then%
\begin{eqnarray}
F_{r,s}(x,y) &=&\dsum\limits_{i=r}^{n}\dsum\limits_{j=\max (s-i,0)}^{n-i}%
\frac{n!}{i!(j-i)!(n-i-j)!}\dint\limits_{0}^{1}\left[ \dot{C}%
_{X,Z}(F_{X}(x),s)\right] ^{i}  \nonumber \\
&&\times \left[ \dot{C}_{X,Z}(F_{X}(y),s)-\dot{C}_{X_{l_{k}},Z}(F_{X}(x),s)%
\right] ^{j-i}  \label{8.1} \\
&&\times \left[ 1-\dot{C}_{X,Z}(F_{X}(y),s)\right] ^{n-i-j}ds  \nonumber
\end{eqnarray}
\end{remark}

\begin{remark}
\label{2.2}It follows from (\ref{8.1}) that, if $C_{X_{i},Z}(u,w)=uw,$ i.e. $%
X_{1},X_{2},...,X_{n}$ are iid\ random variables, then $\dot{C}%
_{X,Z}(F_{X}(x),s)=F_{X}(x)$ and%
\begin{eqnarray*}
F_{r,s}(x,y) &=&\dsum\limits_{i=r}^{n}\dsum\limits_{j=\max (s-i,0)}^{n-i}%
\frac{n!}{i!(j-i)!(n-i-j)!}F_{X}^{i}(x) \\
&&\times (F_{X}(y)-F_{X}(x))^{j-i}\left[ 1-F_{X}(y)\right] ^{n-i-j},
\end{eqnarray*}%
which agrees the well known formula for joint cdf of $r$th and $s$th order
statistics (see David and Nagaraja (2003))
\end{remark}

\subsection{Expressions for joint distributions of order statistics with
permanents}

Suppose $A=(a_{ij}),i,j=1,2,...,n$ is the square matrix. The permanent of $A$
is defined as 
\[
Per(A)=\sum_{(j_{1},j_{2},...,j_{n})\in
S_{n}}\dprod\limits_{k=1}^{n}a_{k,j_{k}}, 
\]%
where $S_{n}$ is defined in (\ref{per}) and $\sum_{(j_{1},j_{2},...,j_{n})%
\in S_{n}}$ denotes the sum over all $n!$permutations $%
(j_{1},j_{2},...,j_{n})$ of $(1,2,...,n).$ Using (\ref{6a}) one can realize
that 
\[
F_{r:n}(x)=\dsum\limits_{i=r}^{n}\frac{1}{i!(n-i)!}\dint\limits_{0}^{1}Per%
\left( M_{1}(x,s)\right) ds, 
\]%
where%
\begin{eqnarray*}
&&M_{1}(x,s) \\
&=&\left( 
\begin{tabular}{llll}
$\dot{C}_{X_{1},Z}(F_{X_{1}}(x),s)$ & $\dot{C}_{X_{2},Z}(F_{X_{2}}(x),s)$ & $%
\cdots $ & $\dot{C}_{X_{n},Z}(F_{X_{n}}(x),s)$ \\ 
$\dot{C}_{X_{1},Z}(F_{X_{1}}(x),x)$ & $\dot{C}_{X_{2},Z}(F_{X_{2}}(x),s)$ & $%
\cdots $ & $\dot{C}_{X_{n},Z}(F_{X_{n}}(x),s)$ \\ 
$\vdots $ & $\vdots $ &  & $\vdots $ \\ 
$\dot{C}_{X_{1},Z}(F_{X_{1}}(x),s)$ & $\dot{C}_{X_{2},Z}(F_{X_{2}}(x),s)$ & $%
\cdots $ & $\dot{C}_{X_{n},Z}(F_{X_{n}}(x),s)$ \\ 
&  &  &  \\ 
$1-\dot{C}_{X_{1},Z}(F_{X_{1}}(x),s)$ & $1-\dot{C}_{X_{2},Z}(F_{X_{2}}(x),s)$
& $\cdots $ & $1-\dot{C}_{X_{n},Z}(F_{X_{n}}(x),s)$ \\ 
$1-\dot{C}_{X_{1},Z}(F_{X_{1}}(x),s)$ & $1-\dot{C}_{X_{2},Z}(F_{X_{2}}(x),s)$
& $\cdots $ & $1-\dot{C}_{X_{n},Z}(F_{X_{n}}(x),s)$ \\ 
$\vdots $ & $\vdots $ &  & $\vdots $ \\ 
$1-\dot{C}_{X_{1},Z}(F_{X_{1}}(x),s)$ & $1-\dot{C}_{X_{2},Z}(F_{X_{2}}(x),s)$
& $\cdots $ & $1-\dot{C}_{X_{n},Z}(F_{X_{n}}(x),s)$%
\end{tabular}%
\right) 
\begin{tabular}{l}
$\left\} 
\begin{tabular}{l}
\\ 
$i$ \\ 
times \\ 
\\ 
\end{tabular}%
\right. $ \\ 
\\ 
$\left\} 
\begin{tabular}{l}
\\ 
$n-i$ \\ 
times \\ 
\\ 
\end{tabular}%
\right. $%
\end{tabular}%
.
\end{eqnarray*}%
Similar permanent expression for joint distribution of order statistics can
be obtained from (\ref{8})

\[
F_{r,s}(x,y)=\dsum\limits_{i=r}^{n}\dsum\limits_{j=\max (s-i,0)}^{n-i}\frac{1%
}{i!(j-i)!(n-i-j)!}\dint\limits_{0}^{1}M_{2}(x,y,s)ds, 
\]%
where%
\[
M_{2}(x,y,s)= 
\]%
\[
=\left( 
\begin{tabular}{llll}
$\dot{C}_{X_{1},Z}(F_{X_{1}}(x),s)$ & $\cdots $ & $\cdots $ & $\dot{C}%
_{X_{n},Z}(F_{X_{n}}(x),s)$ \\ 
&  &  &  \\ 
\begin{tabular}{l}
$\dot{C}_{X_{1},Z}(F_{X_{1}}(y),s)$ \\ 
$-\dot{C}_{X_{1},Z}(F_{X_{1}}(x),s)$%
\end{tabular}
& $\cdots $ & $\cdots $ & 
\begin{tabular}{l}
$\dot{C}_{X_{n},Z}(F_{X_{n}}(y),s)$ \\ 
$-\dot{C}_{X_{n},Z}(F_{X_{n}}(x),s)$%
\end{tabular}
\\ 
&  &  &  \\ 
$1-\dot{C}_{X_{1},Z}(F_{X_{1}}(x),s)$ & $\cdots $ & $\cdots $ & $1-\dot{C}%
_{X_{n},Z}(F_{X_{n}}(x),s)$%
\end{tabular}%
\right) 
\begin{tabular}{l}
$\left\} i\right. $ \\ 
\\ 
\\ 
$\left\} j-i\right. $ \\ 
\\ 
\\ 
$\left\} n-i-j\right. $%
\end{tabular}%
\]

In general the joint distribution function of order statistics $%
X_{r_{1}:n},X_{r_{2}:n},...,X_{r_{k}:n},$ $1\leq r_{1}<r_{2}<$ $\cdots
<r_{k}\leq n$ is 
\begin{eqnarray*}
&&F_{r_{1},r_{2}...,r_{k}}(x_{1},x_{2},...,x_{k})=\sum \frac{1}{%
j_{1}!j_{2}!\cdots j_{k}j_{k+1}!}\dint%
\limits_{0}^{1}M_{3}(x_{1},x_{2},...,x_{k},s)ds, \\
-\infty &<&x_{1}<x_{2}<\cdots <x_{k}<\infty ,
\end{eqnarray*}%
where the sum is over $j_{1},j_{2},...,j_{k+1}$ with $j_{1}\geq
r_{1},j_{1}+j_{2}\geq r_{2},...,j_{1}+j_{2}+\cdots +j_{k}\geq r_{k}$ and $%
j_{1}+j_{2}+\cdots +j_{k}+j_{k+1}=n$ and%
\begin{eqnarray*}
&&M_{3}(x_{1},x_{2},...,x_{k},s) \\
&=&\left( 
\begin{tabular}{llll}
$\dot{C}_{X_{1},Z}(F_{X_{1}}(x_{1}),s)$ & $\cdots $ & $\cdots $ & $\dot{C}%
_{X_{n},Z}(F_{X_{n}}(x_{1}),s)$ \\ 
&  &  &  \\ 
\begin{tabular}{l}
$\dot{C}_{X_{1},Z}(F_{X_{1}}(x_{2}),s)$ \\ 
$-\dot{C}_{X_{1},Z}(F_{X_{1}}(x_{1}),s)$%
\end{tabular}
& $\cdots $ & $\cdots $ & 
\begin{tabular}{l}
$\dot{C}_{X_{n},Z}(F_{X_{n}}(x_{2}),s)$ \\ 
$-\dot{C}_{X_{n},Z}(F_{X_{n}}(x_{1}),s)$%
\end{tabular}
\\ 
$\ \ \ \ \ \ \ \ \vdots $ &  &  & $\ \ \ \ \ \ \ \ \vdots $ \\ 
\begin{tabular}{l}
$\dot{C}_{X_{1},Z}(F_{X_{1}}(x_{k}),s)$ \\ 
$-\dot{C}_{X_{1},Z}(F_{X_{1}}(x_{k-1}),s)$%
\end{tabular}
& $\cdots $ & $\cdots $ & 
\begin{tabular}{l}
$\dot{C}_{X_{n},Z}(F_{X_{n}}(x_{k}),s)$ \\ 
$-\dot{C}_{X_{n},Z}(F_{X_{n}}(x_{k-1}),s)$%
\end{tabular}
\\ 
&  &  &  \\ 
$1-\dot{C}_{X_{1},Z}(F_{X_{1}}(x_{k}),s)$ & $\cdots $ & $\cdots $ & $1-\dot{C%
}_{X_{n},Z}(F_{X_{n}}(x_{k}),s)$%
\end{tabular}%
\right) 
\begin{tabular}{l}
$\left\} j_{1}\right. $ \\ 
\\ 
\\ 
$\left\} j_{2}\right. $ \\ 
\\ 
\\ 
$\left\} j_{k}\right. $ \\ 
\\ 
\\ 
$\left\} j_{k+1}\right. $%
\end{tabular}%
.
\end{eqnarray*}

\begin{remark}
\label{Remark 3}If $\ X_{1},X_{2},...,X_{n}$ are identically distributed
with cdf $F_{X}(x),$ and the joint distributions of ($%
X_{1},Z),(X_{2},Z),...,(X_{n},Z)$ are the same, i.e. $%
C_{X_{i},Z}(u,w)=C_{X,Z}(u,w),\forall (u,w)\in \lbrack 0,1]^{2},$ $%
i=1,2,...,n,$ then%
\begin{eqnarray*}
&&F_{r_{1},r_{2},...,r_{k}}(x_{1},x_{2},...,x_{n}) \\
&=&\sum \frac{1}{j_{1}!j_{2}!\cdots j_{k}j_{k+1}!}\dint\limits_{0}^{1}\left[ 
\dot{C}_{X,Z}(F_{X}(x_{1}),s)\right] ^{j_{1}}\left[ \dot{C}%
_{X,Z}(F_{X}(x_{2}),s)-\dot{C}_{X,Z}(F_{X}(x_{1}),s)\right] ^{j_{2}} \\
&&\times \left[ \dot{C}_{X,Z}(F_{X}(x_{k}),s)-\dot{C}_{X,Z}(F_{X}(x_{k-1}),s)%
\right] ^{j_{k}}\left[ 1-\dot{C}_{X,Z}(F_{X}(x_{k}),s)\right] ^{j^{k+1}}ds,
\\
-\infty &<&x_{1}<x_{2}<\cdots <x_{k}<\infty ,
\end{eqnarray*}%
where the sum is over $j_{1},j_{2},...,j_{k+1}$ with $j_{1}\geq
r_{1},j_{1}+j_{2}\geq r_{2},...,j_{1}+j_{2}+\cdots +j_{k}\geq r_{k}$ and $%
j_{1}+j_{2}+\cdots +j_{k}+j_{k+1}=n.$
\end{remark}

The order statistics are widely used in statistical theory of reliability.
The coherent system consisting of $n-$ components with lifetimes $%
X_{1},X_{2},...,X_{n}$ has lifetime which can be expressed with the order
statistics (or the linear functions of order statistics using Samaniego's
signatures). \ The $n-k+1$-$out$-$of$-$n$ coherent system, for example, has
lifetime $T=X_{k:n\text{ }}$ and the mean residual life function of such a
system at the system level is $\Psi (t)=E\{X_{k:n}-t\mid X_{r:n}>t\}$, $r<k.$
The function $\Psi (t)$ expresses the mean residual life (MRL) length of a $%
n-k+1$-$out$-$of$-$n$ system given that at least $n-r+1$ components are
alive at the moment $t.$ It is clear that to compute the reliability or the
MRL functions of such systems we need the joint distributions of order
statistics. The results presented in this paper can be used if the system
has components with conditionally independent lifetimes.


\begin{thebibliography}{9}
\bibitem{} Bairamov, I. and Arnold, B.C. (2008) On the residual lifelengths
of the remaining components in an n - k + 1 out of n system. \textit{%
Statist. Probab. Letters. }78, 945-952.

\bibitem{} David, H.A. and Nagaraja, H.N. (2003) Order Statistics, Third
Edition, Wiley, New York.

\bibitem{} Dawid, P.A (1979) Conditional independence in statistical theory. 
\textit{J.R. Statist.Soc. B}, 41, No. 1, pp. 1-31.

\bibitem{} Dawid, P.A. (1980) Conditional independence for statistical
operations. \textit{Ann. Statist}., 8, No.3, \ pp. 598-617.

\bibitem{} Pearl J., Geiger D. and Verma, S. T. (1989) Conditional
independence and its representations. \textit{Kybernetika}, 25, No. Suppl,
pp. 33-34.

\bibitem{} Prakasa Rao, B.L.S. (2009) Conditional independence, conditional
mixing and conditional association. \textit{Ann Inst Stat Math}, 61,
441-460. \ 

\bibitem{} Shaked, M. and Spizzichino, F. (1998) Positive dependence
properties of conditionally independent random lifetimes. \textit{%
Mathematics of Operations Research}, 23, No.4, pp.944-959.
\end{thebibliography}
\end{document}